\newcommand{\R}{\ensuremath{\mathbb{R}}}
\theoremstyle{proclaim}
\newtheorem{theorem}{Theorem}[section]
\newtheorem{lemma}[theorem]{Lemma}
\newtheorem{proposition}[theorem]{Proposition}
\theoremstyle{fancyproclaim}
\theoremstyle{statement}
\newtheorem{remark}[theorem]{Remark}
\newtheorem{definition}[theorem]{Definition}
\theoremstyle{fancystatement}
\numberwithin{equation}{section}
\providecommand{\AMS}{$\mathcal{A}$\kern-.1667em%
\lower.25em\hbox{$\mathcal{M}$}\kern-.125em$\mathcal{S}$}
\newcommand{\subjclass}[2][]{%
  \par\noindent\textbf{Mathematics Subject Classification (#1):} #2\par
}
\newcommand{\keywords}[1]{%
  \par\noindent\textbf{Keywords:} #1\par
}
\newenvironment{acknowledgements}
  {\section*{Acknowledgements}}
  {}
\begin{document}

\title{Exact Sequence of 0-Order Pseudodifferential
Operators on a Lie Groupoid}
\author{Mahsa Naraghi}
\date{}

\maketitle

\begin{abstract}
Associated to a Lie groupoid, there are two $C^*$-algebras: the full and the reduced one. The associated order $0$ pseudodifferential calculus gives rise to multiplier algebras of both. We prove that both associated corona algebras are equal to the natural (commutative) principal symbol algebras. 
\end{abstract}

\subjclass[2020]{58J40, 47L80, 22A22}

\keywords{Pseudo-differential operator, Lie groupoid, multiplier algebra,
full $C^*$-algebra, reduced $C^*$-algebra}


\section*{Introduction}
The exact sequence of pseudodifferential operators on a smooth compact manifold $M$ reads:
$$0\to \mathcal{K}\to \Psi^0(M)\overset{\sigma}{\to} C(S^*M)\to 0$$
where $\mathcal{K}$ is the algebra of compact operators on $L^2(M)$, $\Psi^0(M)$ is the closure of the algebra of order $\le 0$ pseudodifferential operators, and $\sigma$ is the principal symbol map with values on the algebra of continuous functions on the half lines of the cotangent bundle $T^*M$ of $M$. This exact sequence is crucial in the index theory. 

In more singular situations, such as manifolds with boundaries or corners, covering spaces, foliations, \ldots, one often introduces a Lie groupoid taking into account this singularity. Alain Connes (\cite{connes2006theorie}) introduced the pseudodifferential calculus of the  holonomy groupoid of a foliation $(M,F)$ and established an exact sequence
$$0\to C^*(M,F)\to \Psi^0(M,F)\overset{\sigma}{\to} C(S^*F)\to 0.$$
This construction was generalised to any Lie groupoid (\cite{Month997, nisWeXuPn99}). In \cite{nisWeXuPn99}, the associated exact sequence of pseudodifferential calculus on a groupoid $G$ is outlined, which reads:
$$0\to C^*(G)\to \Psi^0(G)\overset{\sigma}{\to} C(S^*A_G)\to 0.$$
In fact, for a general Lie groupoid $G$, there are (at least) two natural groupoid $C^*$-algebras: the full groupoid $C^*$-algebra $C^*(G)$ and the reduced one $C_r^*(G)$. Connes used the reduced one for foliations, whereas Nistor-Weinstein-Xu stated it for both in \cite{nisWeXuPn99}.

\medskip The purpose of the present paper is to give a complete proof of this exact sequence for both the reduced and full $C^*$-algebras inspired by the approach outlined in \cite{debord2019lie}.

The steps for this proof are the following. One proves that
\begin{enumerate}
\item pseudodifferential operators of negative order define elements of $C^*(G)$ (prop. \ref{prop2.1});
\item pseudodifferential operators of zero order define multipliers of $C^*(G)$ (prop. \ref{03});

This gives a morphism $C(S^*A_G)\to \Psi^*(G)/C^*(G)$.

\item Extension of the morphism $\lambda :C^*(G)\to C^*_r(G)$ to multipliers yields a morphism $\Psi^*(G)/C^*(G)\to \Psi_r^*(G)/C_r^*(G)$.

\item Finally, we construct for every element  $(x,\xi)\in S^*A_G$ a sequence $(\phi_n)$ of unit vectors of $L^2(G_x)$ such that $\langle \phi_n, \lambda_x(P)(\phi_n)\rangle\to \sigma(P)(x,\xi)$ if $P$ is a pseudodifferential operator of order $0$ and  $\langle \phi_n, \lambda_x(P)(\phi_n)\rangle\to 0$ if $P\in C^*(G)$.

This gives a morphism $\Psi_r^*(G)/C_r^*(G)\to C(S^*A_G)$.

\item Looking at these morphisms on the dense set of pseudodifferential operators of order $0$, shows that they are all isomorphisms.
\end{enumerate}


\section{Preliminaries}

In the first section, we recall some facts about Lie groupoids, their $C^*$-algebras and we list the facts about the pseudodifferential calculus that will be used in the sequel.

\subsection{Lie groupoids}
We recall here the main definitions and fix some notation on Lie groupoids. There are several books and survey papers in which details can be found -- see \emph{e.g.} \cite{mackenzie2005general}.

\begin{definition}
A \textit{groupoid}\index{groupoid} $G$ is a small category with inverses. The set of objects of the groupoid is denoted by $G^{(0)}$, and the set of morphisms is denoted by $G$. Each morphism $\gamma \in G$ has a \emph{source} object $s(\gamma)\in G^{(0)}$ and a \emph{range} object $r(\gamma)\in G^{(0)}$. To every object $x\in G^{(0)}$ is assigned a \emph{unit} morphism $1_x$, and the map~$u:x\mapsto 1_x$ splits both $r$ and $s$ (\emph{i.e.}  $r(1_x)=s(1_x)=x$).   The set of composable morphisms is $G^{(2)}=\{(\gamma_1,\gamma_2)\in G\times G ; s(\gamma_1)=r(\gamma_2)\}$. The \emph{composition} of morphisms is a map $m:G^{(2)}\to G$, $(\gamma_1,\gamma_2)\mapsto \gamma_1\gamma_2$ such that
\begin{itemize}
\item $1_{r(\gamma)}\gamma=\gamma 1_{s(\gamma)}=\gamma$ for every $\gamma \in G$;
    \item $s(\gamma_1\gamma_2)=s(\gamma_2)$ and $r(\gamma_1\gamma_2)=r(\gamma_1)$ for all $(\gamma_1,\gamma_2)\in G^{(2)}$;
    \item $\gamma_1(\gamma_2\gamma_3)=(\gamma_1\gamma_2)\gamma_3$ for all $\gamma_1,\gamma_2,\gamma_3\in G$ such that $s(\gamma_1)=r(\gamma_2)$ and $s(\gamma_2)=r(\gamma_3)$.
\item Every $\gamma\in G$ has an \emph{inverse} $\gamma^{-1}$ such that $s(\gamma^{-1})=r(\gamma), r(\gamma^{-1})=s(\gamma)$ and $\gamma^{-1}\gamma=1_{s(\gamma)}, \gamma\gamma^{-1}=1_{r(\gamma)}$.
\end{itemize}
\end{definition}

We often identify $G^{(0)}$ with a subset of $G$ by means of the unit map $x\mapsto 1_x$.
We write $G\overset{r}{\underset{s}\rightrightarrows} G^{(0)}$ or just $G\rightrightarrows G^{(0)}$ to mean $G$ is a groupoid with set of objects $G^{(0)}$.
 
\begin{definition}
A \textit{Lie groupoid} is a groupoid $G\rightrightarrows M$ with a smooth manifold structure on $G$ and $M$ such that  $s,r:G\to M$ are smooth submersions, multiplication is a smooth map and the unit map is smooth. 

As $s$ and $r$ are submersions, $G^{(2)}=(s\times r)^{-1}(\Delta(M)$ is an embedded submanifold of $G\times G$ - this gives a meaning to the fact that $m$ is smooth.

The  unit map $u$ is an immersion and $M$ is an embedded submanifold of $G$. 

The inverse map $\gamma \mapsto \gamma^{-1}$ is a diffeomorphism.  
\end{definition}

We denote by $N$ the normal bundle of the inclusion $M\subset G$: for $x\in M$, we have $N_x=T_xG/T_xM$. This bundle has many features which are not essential for us here. In particular, the space of smooth sections of this bundle is the \emph{Lie algebroid} of $G$ (see \cite{mackenzie2005general}). 

\subsection{\texorpdfstring{$C^*$}{C*}-algebras of a Lie Groupoid.} (See \cite{connes2006theorie},  \cite{renault1980groupoid}.) 

\paragraph{The convolution algebra.} Let $G$ be a Lie groupoid. There is a natural convolution on the space of smooth functions with compact support $C^\infty _c(G)$ defined by $f_1\star f_2 (\gamma)=\int_{\gamma_1\gamma_2=\gamma}f_1(\gamma_1)f_2(\gamma_2)$. 

To perform this convolution product, we need a smooth family of measures on the sets $m^{-1}(\{\gamma\})=\{(\gamma_1,\gamma_2)\in G^{(2)};\ \gamma_1\gamma_2=\gamma\}$. Note that the map $(\gamma_1,\gamma_2)\mapsto \gamma_1$ (\emph{resp.}  $(\gamma_1,\gamma_2)\mapsto \gamma_1$) is a diffeomorphism between  $m^{-1}(\{\gamma\})$ and $G^{r(\gamma)}$ (\emph{resp.}  $ G_{s(\gamma)}$). Such a measure is provided by a smooth left Haar system: a smooth family of Lebesgue measures on the manifolds $G^x$ ($x\in M$) invariant under the diffeomorphisms $L_\gamma:\gamma_1\mapsto \gamma \gamma_1$ between $G^{s(\gamma)}$ and $G^{sr(\gamma)}$.

Alain Connes \cite{connes2006theorie} gave a nice canonical meaning of this convolution using sections of a natural bundle of half-densities: we replace functions by smooth compactly supported sections of the half density bundle $\Omega^{1/2}(\ker ds\times \ker dr)$.  See \cite{CrainicMestre2025, NSV} for details of this construction.

\medskip With a natural  smooth choice of measures, or using half densities, the convolution is associative. 

The involution  $f^*(\gamma)=\overline{f(\gamma^{-1})}$ turns  $C_c^\infty (G)$ into a *-algebra.

\paragraph{The full C*-algebra}

\begin{definition}
A representation of a Lie groupoid $G$ on a complex Hilbert space $H$ is a $*$-algebra homomorphism $\Phi: C_c(G)\to B(H)$ which is bounded with respect to the natural inductive limit topology.
\end{definition}

\sloppy It turns out (\emph{cf.} \cite{renault1980groupoid}) that the operator norm of $\Phi(f)$ can be controlled by an $L^1$-type estimate so that $\sup(\|\Phi(f)\|)$ over all the possible continuous $*$-representations $\Phi$ is finite for every $f\in C_c(G)$.

\begin{definition}
The maximal norm of $f\in C_c^\infty(G)$,  denoted as $\|\cdot\|_{\max}$, is the supremum of the operator norm $\|\Phi(f)\|$ over all continuous representations $\Phi$ of $G$. The completion of $C_c^\infty(G)$ with this norm is called the \index{maximal $C^*$-algebra}maximal $C^*$-algebra of $G$ and is denoted with $C^*_{\text{max}}(G)$. In the following, we denote it by $C^*(G)$.
\end{definition}

\medskip\paragraph{The reduced $C^*$-algebra.}
For every $x\in M$ we may represent the $*$-algebra $C_c^\infty (G)$ on the Hilbert space $L^2(G^x)$.

For $f\in C_c(G)$ and $\xi\in C_c^\infty(G^x)$, we may form $f\star \xi\in C_c^\infty(G^x)$ defined by $(f\star \xi)(\gamma)=\int _{m^{-1}(\{\gamma\})}f(\gamma_1)\xi(\gamma_2)$. 

In this way, we obtain a $*$-morphism $\lambda^x:C_c(G)\to \mathcal{L}(L^2(G^x))$.

\begin{definition}
The reduced norm of $f\in C_c(G)$ is defined by $\|f\|_r=\sup \|\lambda^x(f)\|$. The completion of $C_c(G)$ with respect to $\|\ \|_r$ is the reduced $C^*$-algebra of $G$ and is denoted with $C^*_r(G)$.
\end{definition}

\begin{remark}
\begin{itemize}
    \item For every $f\in C_c^\infty(G)$,  $\|f\|_r\leq \|f\|_{\max}$.
    \item The identity map on $C_c^\infty(G)$ induces a surjective $*$-homomorphism from $C^*(G)$ to $C_r^*(G)$.
\end{itemize}
\end{remark}

\subsection{Pseudodiﬀerential operators on Lie groupoids} Associated with every Lie groupoid is a pseudodifferential calculus, generalising the pseudodifferential calculus on a manifold. This calculus was first introduced by Connes for foliations (\cite{connes2006theorie}) and  generalised by several authors (\cite{Month997, nisWeXuPn99}...).

We recall here just a few facts about this calculus which will be useful for us. A detailed account on this calculus can be found in \cite{ NSV, nisWeXuPn99,  vassout2006unbounded}.

\medskip
Let $G\rightrightarrows M$ be a Lie groupoid. Recall we denote by $N=T_MG/TM$ the normal bundle of the submanifold $M$ of $G$, $p :T_MG \to N$ the quotient map, and $N^*$ its dual bundle. 

\begin{itemize}
\item Denote by $\Sigma_c^m{(N^*)}$ the space of homogeneous compactly supported symbols: it is the space of smooth positively $m$-homogeneous functions on $N^*\setminus M$ that vanish outside some $q^{-1}(K)$ where $K$ is a compact subset of $M$ and $q :n^* \to M$ the structural map,  \emph{i.e.} smooth functions $f(x,\xi)$ with $x\in M$, $\xi\in N^*_x,\ \xi\ne 0$, $f(x,\lambda \xi)=\lambda^mf(x,\xi)$ for every $\lambda \in \R_+^*$, and $f(x,\xi)=0$ if $x\not\in K$. 

\item
In the algebra of multipliers of the algebra $C_c^\infty(G)$, there is, for every $m\in \mathbb{Z}$, a space $\mathcal{P}^m_c(G)$ called the space of  ``smooth, compactly supported pseudodifferential operators of order  $m$''.  The space $\mathcal{P}^m_c(G)$ is closed under $P\mapsto P^*$.

\item For every $m\in \mathbb{Z}$, we have $\mathcal{P}^{m-1}_c(G)\subset \mathcal{P}^{m}_c(G)$.
Moreover, there is a (principal) symbol map, which is a linear map $\sigma_m:\mathcal{P}^m_c(G)\to \Sigma_c^m(N^*)$ which is  surjective with kernel $\mathcal{P}^{m-1}_c(G)$.

\item For $m,n\in \mathbb{Z}$, $P\in \mathcal{P}^{m}_c(G)$ and $Q\in \mathcal{P}^{n}_c(G)$, we have $PQ\in \mathcal{P}^{m+n}_c(G)$ and $\sigma_{m+n}(PQ)=\sigma_m(P)\sigma_n(Q)$, and $\sigma_m( P^*)=\overline{\sigma_m( P)}$.

\item
$\bigcap _{m\in \mathbb{Z}}\mathcal{P}^{m}_c(G)=C_c^{\infty}(G)$.

\item For every $a\in C^{\infty}(M)$, define the multiplier $M_a$ of $C_c^\infty(G)$ by setting $(M_af)(\gamma)=a(r(\gamma))f(\gamma)$ and $(fM_a)(\gamma)=f(\gamma)a(s(\gamma))$. If $a\in C_c^\infty(M)$, then $M_a\in \mathcal{P}^{0}_c(G)$, and $\sigma_0(M_a)(x,\xi)=a(x)$ (for every $x\in M$ and $\xi\in N_x\setminus\{0\}$).
\end{itemize}

To construct $\mathcal{P}^{m}_c(G)$ one proceeds as follows:

Let $\theta : U' \to U$ be an exponential map, which is a diffeomorphism from an open neighborhood $U'\subset N$ of the zero section of $N$  to an open neighborhood of $M$ in $G$, such that
\begin{itemize}
    \item $\theta(x,0)=x$ for all $x\in M$ and $r(\theta(x,X))=x$ for $(x,X)\in U'$.
    \item $p_x(\mathrm d\theta_{(x,0)}(X))=X $ for all $x\in M$ and $X\in N_x$, where $p : T_M G \to N$ is the quotient map.
\end{itemize}

\begin{definition}\label{340}
An element $P\in \mathcal{P}^{m}_c(G)$ is a  multiplier $P=P_0+\kappa$ of the $*$-algebra $C_c^\infty(G)$ where $\kappa\in C_c^\infty(G)$ and  multiplication by $P_0$ is given by an expression of the form
\begin{equation*}
  (P_0\star f)(\gamma)
  =\frac{1}{(2\pi)^n}\int_{N^*_{r(\gamma)}}\left(\int_{G_{s(\gamma)}}
    \mathrm e^{-\mathrm i\langle \theta^{-1}(\gamma \eta^{-1}),\xi\rangle}\,
    a\big(r(\gamma),\xi\big)\,\chi(\gamma \eta^{-1})\,f(\eta)\,
    \mathrm d\eta\right)\,\mathrm d\xi,
\end{equation*}
where  $\chi$ is a cut-off function with $\chi(\eta)=0$ for $\eta\not\in U$,  and $\chi(\eta)=1$ for $\eta$ in a neighborhood of $M$ in $U$, $N^*$ denotes the dual bundle of $N$, $x=r(\gamma)$, $n=\dim N_x$ and $a\sim \sum_{k=0}^{+\infty}a_{m-k}$ is a \emph{classical symbol} of order $m$ (with  $a_j(x,\xi)$ positively homogeneous of order $j$ in $\xi$, see e.g. \cite{taylor1981pseudodiff} for more details on classes of symbols). 

One has $\sigma_m(P)=a_m$.
\end{definition}

\section{Exact Sequences of Pseudodifferential Operators of Order 0}
We now come to the main constructions of the paper.

\subsection{Pseudodifferential operators of negative order}

\begin{proposition}\label{prop2.1}
For $m<0$, we have $\mathcal{P}^{m}_c(G)\subset C^*(G)$. More precisely, there is an algebra morphism $j:\mathcal{P}^{m}_c(G)\to C^*(G)$ whose restriction to $C_c^\infty(G)$ is the inclusion of $C_c(G)$ in the $C^*$-algebra of $G$.
\end{proposition}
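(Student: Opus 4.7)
The plan is to exhibit $P\in\mathcal{P}^m_c(G)$ as convolution by a compactly supported $L^1$ function on $G$ and invoke the continuous inclusion $L^1(G)\hookrightarrow C^*(G)$. Writing $P=P_0+\kappa$ as in Definition~\ref{340}, the smoothing part $\kappa\in C_c^\infty(G)$ already lies in $C^*(G)$, so the real work is with $P_0$. Rearranging the formula for $P_0\star f$ (swapping the $\xi$- and $\eta$-integrations) exhibits $P_0$ as convolution with the Schwartz kernel
$$k_P(\gamma)=\frac{\chi(\gamma)}{(2\pi)^n}\int_{N^*_{r(\gamma)}} \mathrm e^{-\mathrm i\langle\theta^{-1}(\gamma),\xi\rangle}\,a(r(\gamma),\xi)\,\mathrm d\xi,$$
understood as an oscillatory integral; the desired map will be $j(P):=k_P+\kappa$.

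Next I would establish that $k_P$ is $L^1$ on $G$ uniformly, in the sense that $\sup_x\int_{G^x}|k_P|\,\mathrm d\lambda^x<\infty$ and the analogous bound on $G_x$-fibers. After trivialising $N$ locally and localising on a compact $x$-set containing $\mathrm{supp}\,\chi$, the standard Kohn--Nirenberg symbol calculus applies. Integration by parts via $|X|^{2k}\mathrm e^{-\mathrm iX\cdot\xi}=(-\Delta_\xi)^k\mathrm e^{-\mathrm iX\cdot\xi}$ turns $a$ into a symbol of order $m-2k$; picking $k$ so that $m-2k<-n$ renders the resulting Fourier integral absolutely convergent and yields both rapid decay of $k_P$ as $|\theta^{-1}(\gamma)|\to\infty$ and the singularity bound $|k_P(\gamma)|\leq C\,|\theta^{-1}(\gamma)|^{-(n+m)}$ near $M$ (with continuity when $n+m<0$ and a logarithmic factor when $n+m=0$). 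Since $m<0$, $|X|^{-(n+m)}$ is locally integrable on the $n$-dimensional fiber $N_x$, and together with the compact support from $\chi$ and the uniformity of the symbol estimates on compact $x$-sets, this gives the required $L^1$ bound. The $G_x$-estimate follows by applying the same argument to $P^*\in\mathcal{P}^m_c(G)$, which has the same order.

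Renault's estimate $\|\cdot\|_{\max}\leq\|\cdot\|_I$ (\cite{renault1980groupoid}) then places $k_P$ in $C^*(G)$, so $j(P)=k_P+\kappa\in C^*(G)$ is defined. The algebra-morphism property is checked on the dense subspace $C_c^\infty(G)$: for every $f\in C_c^\infty(G)$,
$$(k_P\star k_Q)\star f=k_P\star(k_Q\star f)=P\star(Q\star f)=(PQ)\star f=k_{PQ}\star f,$$
so $j(P)j(Q)$ and $j(PQ)$ agree as multipliers on the dense subalgebra $C_c^\infty(G)\subset C^*(G)$, hence as elements of $C^*(G)$. Restriction of $j$ to $C_c^\infty(G)\subset\mathcal{P}^m_c(G)$ (viewed as order-$(-\infty)$ operators) recovers the stated inclusion.

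The main obstacle is the kernel bound above: when $m\geq -n$ the defining oscillatory integral is only conditionally convergent, so one must use the integration-by-parts representation $|X|^{2k}k_P=\mathcal F^{-1}[(-\Delta_\xi)^k a]$ for sufficiently large $k$ to justify manipulations, then isolate the singular behavior at $X=0$ via a homogeneity argument on the principal symbol and handle lower-order terms via the classical expansion $a\sim\sum_{j\geq 0}a_{m-j}$. Controlling the constants uniformly in $x$ requires a partition of unity on the image of $\chi$ in $M$; this is technical but standard.
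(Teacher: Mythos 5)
Your argument is correct in outline, but it takes a genuinely different and considerably more analytic route than the paper. You attack the singular kernel head-on: oscillatory-integral manipulations, the homogeneity of the terms $a_{m-j}$, the bound $|k_P|\lesssim |X|^{-(n+m)}$ (with a logarithm at $m=-n$), local integrability of $|X|^{-(n+m)}$ on the $n$-dimensional fibers for $m<0$, and finally Renault's estimate $\|\cdot\|_{\max}\le\|\cdot\|_I$. The paper instead only treats the trivial range $m<-n$ directly --- there the kernel $g$ is genuinely continuous and compactly supported, exactly your $k_P$ in the easy case, so no singularity analysis is needed --- and then bootstraps: for $m<-n/2$ one has $P^*P$ of order $2m<-n$, hence $j(P^*P)\in C^*(G)$, the inequality $\|Pf\|^2=\|f^*P^*Pf\|\le\|P^*P\|\,\|f\|^2$ makes $P$ a multiplier, and $j(P)^*j(P)\in C^*(G)$ forces $j(P)\in C^*(G)$; induction on $k$ with $2^km<-n$ covers all $m<0$. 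The paper's $C^*$-trick is shorter and entirely avoids the hard estimates you defer to ``standard'' symbol calculus; your approach costs more work but yields strictly more, namely that the Schwartz kernel is an $I$-norm-bounded $L^1$ kernel, not merely an element of $C^*(G)$. Two small points you should still address: to conclude $k_P\in C^*(G)$ from $\|k_P\|_I<\infty$ you must check that $k_P$ lies in the $I$-norm closure of $C_c^\infty(G)$ (truncate near $M$ using the uniform integrability of $|X|^{-(n+m)}$ over the fibers, then regularise); and the multiplicativity check should be stated for the algebra $\mathcal{P}^m_c(G)$ itself, noting $PQ\in\mathcal{P}^{2m}_c(G)\subset\mathcal{P}^m_c(G)$, which your density argument then handles correctly.
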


\begin{proof}
Denote by $n$ the rank of the bundle $N$. Let $P= P_0+\kappa \in \mathcal{P}^{m}_c(G)$, where $P_0$ is associated to a classical symbol $a$ of order $m$ with $m<-n$. By definition of classical symbols, the map $\xi \to \|\xi\|^{-m} a(x, \xi)$ is bounded on $N_x$, so that the following map $g$ is continuous as the "Fourier transform" of an integrable function. 
$g(\eta)=  \frac{\chi(\eta)}{(2\pi)^n}\int_{N^*_{r(\eta)}G} 
    \mathrm e^{-\mathrm i\langle \theta^{-1}(\eta),\xi\rangle}\,
    a\big(r(\eta),\xi\big)\, )\,\mathrm d\xi$.
    
Hence we have $g\in C_c(G)$ and $(P_0\star f)=g\star f$ , $f\star P_0=f\star g$. We may then put $j(P_0)=j(g)$.
     
In particular, for  $f\in C_c^\infty (G)$, we have $\|P\star f\|_{C^*(G)}\le \|g\|_{C^*(G)}\|f\|_{C^*(G)}$.

Now, if $m<-n/2$, since $j(P^*P)\in C^*(G)$, it follows that, for every $f\in C_c^\infty(G)$, $$\|P\star f\|_{C^*(G)}^2=\|f^*\star P^*\star P\star f\|_{C^*(G)}\le \|f^*\|_{C^*(G)}\|P^*P\|_{C^*(G)}\|f\|_{C^*(G)},$$ and it follows that $P$ extends to a left (and in the same way right) multiplier $j(P)$ of $C^*(G)$. But since $j(P)^*j(P)=j(P^*P)\in C^*(G)$, we have $j(P)\in C^*(G)$.

We find by induction that, if $2^km<-n$, then $j(P)\in C^*(G)$.
\end{proof}

\subsection{Pseudodifferential operators of order $0$ are multipliers}

Note that the algebra $\Sigma_c^0(N^*)$ identifies with the algebra $C_c^\infty(S^*N)$ of smooth functions with compact support on the space $S^*N$ of half lines in $N^*$.

\begin{proposition}\label{03}
Let $G\rightrightarrows M$ be a Lie groupoid and $P\in \mathcal{P}_{0,c}(G)$. Let $\sigma _0(P) \in C_0(S^*N)$ denote the principal symbol of $P$,  
\begin{enumerate}
    \item $P$ is a multiplier of $C^*(G)$;
    \item  $\|P\|_{\mathcal{M}(C^*(G))/C^*(G)}\leq \| \sigma_0(P)\|_\infty$.
\end{enumerate}
\end{proposition}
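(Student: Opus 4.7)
The plan is to adapt Hörmander's classical trick for $L^2$-boundedness of zero-order pseudodifferential operators to the groupoid setting, using Proposition \ref{prop2.1} to absorb lower-order remainders into $C^*(G)$. Write $c := \|\sigma_0(P)\|_\infty$, fix $\epsilon > 0$, let $K \subset M$ be the compact base support of $\sigma_0(P)$, and choose $\chi \in C_c^\infty(M)$ with $0 \le \chi \le 1$ and $\chi \equiv 1$ on an open neighborhood of $K$. The first step is to set
\begin{equation*}
  b^2 := (c+\epsilon)^2 \chi(x)^2 - |\sigma_0(P)(x,\xi)|^2
\end{equation*}
and observe that this is a nonnegative, smooth, $x$-compactly supported, degree-zero symbol: near $K$ the factor $\chi^2 \equiv 1$ so $b^2 \ge (c+\epsilon)^2 - c^2 > 0$, while off $K$ the term $|\sigma_0(P)|^2$ vanishes and $b = (c+\epsilon)\chi$, smooth. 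Thus $b \in \Sigma_c^0(N^*)$, and by surjectivity of the principal symbol there is $Q \in \mathcal{P}^0_c(G)$ with $\sigma_0(Q) = b$.

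Symbolic calculus then gives $\sigma_0(P^*P + Q^*Q) = |\sigma_0(P)|^2 + b^2 = (c+\epsilon)^2 \chi^2 = \sigma_0((c+\epsilon)^2 M_\chi^2)$, so
\begin{equation*}
  R := P^*P + Q^*Q - (c+\epsilon)^2 M_\chi^2 \;\in\; \mathcal{P}^{-1}_c(G),
\end{equation*}
which lies in $C^*(G)$ by Proposition \ref{prop2.1}. Applied to $f \in C_c^\infty(G)$, this yields the identity $(Pf)^*(Pf) + (Qf)^*(Qf) = (c+\epsilon)^2 (M_\chi f)^*(M_\chi f) + f^*Rf$ inside $C^*(G)$; dropping the nonnegative term $(Qf)^*(Qf)$ and using $\|M_\chi\|_{\mathcal{M}(C^*(G))} \le \|\chi\|_\infty \le 1$ (a consequence of $a \mapsto M_a$ being a $*$-homomorphism $C_b(M) \to \mathcal{M}(C^*(G))$, hence contractive) gives
\begin{equation*}
  \|Pf\|_{C^*(G)}^2 \;\le\; \bigl((c+\epsilon)^2 + \|R\|_{C^*(G)}\bigr)\|f\|_{C^*(G)}^2.
\end{equation*}
This bound, together with its analogue for $P^* \in \mathcal{P}^0_c(G)$, shows that $P$ extends to a two-sided multiplier of $C^*(G)$: this is (i).

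Once (i) is established, the identity $P^*P + Q^*Q = (c+\epsilon)^2 M_\chi^2 + R$ holds inside the $C^*$-algebra $\mathcal{M}(C^*(G))$ (all terms now being multipliers); dropping the positive term $Q^*Q$ gives $P^*P \le (c+\epsilon)^2 M_\chi^2 + R$. Projecting to the quotient $\mathcal{M}(C^*(G))/C^*(G)$ annihilates $R$, so
\begin{equation*}
  \|P\|_{\mathcal{M}/C^*}^2 \;=\; \|P^*P\|_{\mathcal{M}/C^*} \;\le\; (c+\epsilon)^2 \|M_\chi\|_{\mathcal{M}/C^*}^2 \;\le\; (c+\epsilon)^2,
\end{equation*}
and letting $\epsilon \to 0$ yields (ii). The main technical delicacy is the smooth square-root construction for $b$: it works precisely because requiring $\chi \equiv 1$ on an open neighborhood of the closed set $K$ forces the two local descriptions of $b$ to match smoothly on their overlap (off $K$, $\sigma_0(P)$ vanishes and $b=(c+\epsilon)\chi$; near $K$, $\chi \equiv 1$ and $b$ is the smooth square root of a strictly positive symbol). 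Everything else reduces to routine symbol bookkeeping combined with Proposition \ref{prop2.1}.
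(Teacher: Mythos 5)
Your proof is correct and follows essentially the same route as the paper: H\"ormander's square-root trick producing $Q$ with $\sigma_0(P)^{*}\sigma_0(P)+\sigma_0(Q)^{*}\sigma_0(Q)$ equal to the symbol of a (scaled) $M_\chi^2$, absorption of the order $-1$ remainder into $C^*(G)$ via Proposition \ref{prop2.1}, and a positivity argument in the quotient. The only cosmetic difference is the parametrization (the paper normalizes $\|\sigma_0(P)\|_\infty\le 1$ and lets $t\to 1^-$, so its square root $\sqrt{1-|t\sigma_0(P)|^2}$ is globally that of a strictly positive function, whereas you use $c+\epsilon$ and must patch the two local descriptions of $b$ — which you do correctly).
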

\begin{proof}
The map $\sigma_0(P)$ is bounded, so up to dividing $P$ by a constant, we can assume $\| \sigma_0(P)\|_\infty\leq 1$. 

\medskip
Let $t\in [0,1)$. Let $a\in C^\infty_c(M)$ be a non-negative function  such that $\|a\|_\infty=1$ and $a(x)=1$ whenever $\sigma_0(P)(x,\xi)\neq 0$.

\medskip
Then the map $\displaystyle b : (x,\xi)\mapsto a(x)\sqrt{1-|t\sigma(P)(x,\xi)|^2}$ is a smooth function with compact support on $S^*N$, hence the principal symbol of a pseudodifferential operator $Q$ of order $0$.  \\ Then, $\sigma_0(t^2P^*P+Q^*Q)=a^2$ so that $M_a^2-(t^2 P^*P+Q^*Q)\in \mathcal{P}^{-1}_c(G)\subset C^*(G)$. Thus $(t^2 P^*P+Q^*Q)$ is a multiplier.

For $f\in C_c^\infty(G)$, we have (in $C^*(G)$) $\|tPf\|^2=t^2\|f^*P^*Pf\|\le \|f^*(t^2 P^*P+Q^*Q)f\|\le \|(t^2 P^*P+Q^*Q)\|\|f\|^2$. 

In the same way $P$ is a right multiplier (or $P^*$ a left multiplier). Hence $P \in \mathcal{M}(C^*(G))$.

This also gives the following equality of norms 
\begin{equation*}
    \|t^2 P^*P+Q^*Q\|_{\mathcal{M}(C^*(G))/C^*(G)}=\|M_a^2\|_{\mathcal{M}(C^*(G))/C^*(G)}=\|a\|^2_\infty=1.
\end{equation*}



Thus, by positivity, 
\begin{align*}
 \|tP\|^2_{\mathcal{M}(C^*(G))/C^*(G)}&=\|t^2 P^*P\|_{\mathcal{M}(C^*(G))/C^*(G)}   \\
 &\leq \|t^2 P^*P+Q^*Q\|_{\mathcal{M}(C^*(G))/C^*(G)} \\
 &\le  1.
\end{align*}

This being true for every $t\in [0,1)$, we find $\|P\|_{\mathcal{M}(C^*(G))/C^*(G)} \le 1$.
\end{proof}

We then get the folllowing proposition.

\begin{proposition}
Let $\Psi^*(G)$ denote the closure of the $\mathcal{P}_{0,c}$ in $\mathcal{M}(C^*(G))$. There exists a unique $C^*$-morphism $\alpha :C_0(S^*N)\to \Psi^*(G)/C^*(G)$ making the diagram 

\begin{center}
\begin{tikzcd}[row sep=huge, column sep=huge]
    \mathcal{P}_{0,c} \arrow[rd, "q\circ j"'] \arrow[r, "\sigma"] & C_0(S^*N) \arrow[d, "\alpha"] \\
    & \Psi^*(G) /C^*(G)
\end{tikzcd}
\end{center}
commute,  where $j$ denotes the canonical injection of $\mathcal{P}_{0,c}$ in $\mathcal{M}(C^*(G))$ and $q: \Psi^*(G)\rightarrow\Psi^*(G)/C^*(G)$ denotes the quotient map.
\end{proposition}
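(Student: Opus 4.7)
The plan is the standard "extension of a contractive $*$-morphism from a dense subalgebra" argument. Define $\alpha$ first on $C_c^\infty(S^*N) \simeq \Sigma_c^0(N^*) = \sigma_0(\mathcal{P}_{0,c}(G))$ by the only formula compatible with the diagram, namely
\[
\alpha_0(\sigma_0(P)) := q(j(P)), \qquad P \in \mathcal{P}_{0,c}(G),
\]
and then extend $\alpha_0$ by continuity to the completion $C_0(S^*N)$.

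The first step is well-definedness of $\alpha_0$. If $\sigma_0(P)=\sigma_0(P')$, then $P-P' \in \ker \sigma_0 = \mathcal{P}^{-1}_c(G)$, and by Proposition \ref{prop2.1} this is contained in $C^*(G)$, so $q(j(P-P'))=0$. The multiplicative properties of the symbol map --- $\sigma_0(PQ)=\sigma_0(P)\sigma_0(Q)$ and $\sigma_0(P^*)=\overline{\sigma_0(P)}$ --- together with the fact that $q\circ j$ is itself a $*$-morphism into $\mathcal{M}(C^*(G))/C^*(G)$, imply that $\alpha_0$ is a $*$-algebra morphism from $C_c^\infty(S^*N)$ to $\Psi^*(G)/C^*(G)$.

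The decisive step is continuity, and this is precisely Proposition \ref{03}: for each $P$,
\[
\|\alpha_0(\sigma_0(P))\|_{\Psi^*(G)/C^*(G)} = \|q(j(P))\| \leq \|\sigma_0(P)\|_\infty.
\]
Since $C_c^\infty(S^*N)$ is sup-norm dense in $C_0(S^*N)$, the map $\alpha_0$ admits a unique continuous extension $\alpha\colon C_0(S^*N) \to \mathcal{M}(C^*(G))/C^*(G)$, which remains a $*$-morphism by continuity of the $C^*$-operations. The image lies in the closed subspace $\Psi^*(G)/C^*(G)$ (closed because it is the quotient of the closed subspace $\Psi^*(G) \subset \mathcal{M}(C^*(G))$ by $C^*(G)$) since it already does so on the dense subset $C_c^\infty(S^*N)$ by definition of $\Psi^*(G)$ as the closure of $\mathcal{P}_{0,c}(G)$. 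Commutativity of the diagram on $\mathcal{P}_{0,c}(G)$ is built in, and uniqueness of $\alpha$ is immediate from the density of $\sigma_0(\mathcal{P}_{0,c}(G))$ in $C_0(S^*N)$: any other $\alpha'$ making the diagram commute must agree with $\alpha_0$ on this dense subalgebra, hence with $\alpha$ everywhere.

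There is no serious obstacle: the analytic content has already been packaged into Proposition \ref{prop2.1} (operators with zero principal symbol lie in $C^*(G)$) and Proposition \ref{03} (the multiplier-quotient norm is dominated by the sup norm of the symbol). What remains is purely formal, assembling these two facts into the universal property stated.
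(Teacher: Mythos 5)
Your proposal is correct and follows essentially the same route as the paper: well-definedness from $\ker\sigma_0=\mathcal{P}^{-1}_c(G)\subset C^*(G)$ (Prop. \ref{prop2.1}), contractivity from Prop. \ref{03}, and extension by density of $C_c^\infty(S^*N)$ in $C_0(S^*N)$. Your write-up merely spells out the $*$-morphism property and the location of the image in $\Psi^*(G)/C^*(G)$, which the paper leaves implicit.
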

\begin{proof}
    We know that the map $\sigma : \mathcal{P}_{0,c}\to \Sigma_c^0(N^*) \simeq C_c^\infty(S^*N)$ is surjective and has kernel $\mathcal{P}_{-1,c}$, so if $\sigma(P_1) = \sigma (P_2),$ then $j(P_1-P_2) \in C^*(G)$ so $q\circ j (P_1)= q \circ j(P_2)$. We conclude by using the previous proposition and the density of $C_c^\infty(S^*N)$ in $C_0(S^*N)$.
\end{proof}

\subsection{Extensions to multipliers}
The following result is classical  (see \emph{e.g.} \cite{akemann1973multipliers, pedersen1979c}). 

\begin{proposition}\label{prop2.3}
Suppose $f:A\to B$ is an onto morphism of $C^*$-algebras. Then $f$ extends uniquely to a $*$-homomorphism $\Tilde{f}:\mathcal{M}(A)\to \mathcal{M}(B)$, and we get a commutative diagram 
\begin{center}
  \begin{tikzcd}
    B\arrow{r} & \mathcal{M}(B)\arrow{r}{q_B} & \mathcal{M}(B)/B \\
    A\arrow{r}\arrow{u}{f}  & \mathcal{M}(A)\arrow{r}{q_A}\arrow{u}{\Tilde f}  &\mathcal{M}(A)/A \arrow{u}{\overline{f}} \\
\end{tikzcd}  
\end{center}
\end{proposition}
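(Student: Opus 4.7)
The plan is to use the characterization of multiplier algebras as algebras of double centralizers (pairs $(L, R)$ of commuting left and right centralizers of $A$) and to push multipliers across the canonical identification $B \cong A/I$, where $I = \ker f$. The key preliminary fact I would first establish is that every $m \in \mathcal{M}(A)$ preserves $I$, \emph{i.e.}\ $m \cdot I \subseteq I$ and $I \cdot m \subseteq I$. Fix $i \in I$ and an approximate unit $(u_\lambda)$ of $I$; then $i u_\lambda \to i$ and the left-multiplier identity gives $m \cdot i = \lim_\lambda m \cdot (i u_\lambda) = \lim_\lambda (m \cdot i) u_\lambda$. Since $m \cdot i \in A$ and each $u_\lambda \in I$, the terms lie in $A \cdot I \subseteq I$, whence $m \cdot i \in \overline{I} = I$; the other inclusion is symmetric.

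Given $m = (L, R) \in \mathcal{M}(A)$, I would then define $\tilde{f}(m) = (\tilde L, \tilde R)$ on $B \cong A/I$ by $\tilde L([a]) = [L(a)]$ and $\tilde R([a]) = [R(a)]$. Well-definedness is exactly the ideal-invariance just proved, and the centralizer identity $a L(b) = R(a) b$ passes to the quotient, so $\tilde{f}(m) \in \mathcal{M}(B)$. It is then immediate from the definitions that $\tilde f : \mathcal{M}(A) \to \mathcal{M}(B)$ is a $*$-homomorphism and that it agrees with $f$ on $A \subseteq \mathcal{M}(A)$, which gives the commutativity of the left square.

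For uniqueness, if $g : \mathcal{M}(A) \to \mathcal{M}(B)$ is any $*$-homomorphism extending $f$, then for every $m \in \mathcal{M}(A)$ and $a \in A$ one has $g(m) \cdot f(a) = g(m a) = f(m a) = \tilde f(m) \cdot f(a)$, and surjectivity of $f$ forces $g(m) = \tilde f(m)$ as left multipliers of $B$; the right-multiplier side is analogous. Finally, since $\tilde f$ restricts to $f$ on $A$, it maps $A \subseteq \mathcal{M}(A)$ into $B \subseteq \mathcal{M}(B)$ and descends to a well-defined morphism $\overline{f} : \mathcal{M}(A)/A \to \mathcal{M}(B)/B$ making the right square commute. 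The main obstacle is the ideal-invariance step, resolved by the approximate-unit argument above; surjectivity of $f$ is then used essentially to identify $B$ with $A/I$ in the construction and to force uniqueness.
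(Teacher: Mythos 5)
Your proof is correct. The paper offers no argument for this proposition at all --- it is stated as classical with a pointer to Akemann--Pedersen--Tomiyama and Pedersen --- and your double-centralizer proof (ideal-invariance of multipliers via an approximate unit of $\ker f$, descent to $B\cong A/\ker f$, uniqueness forced by surjectivity of $f$ and non-degeneracy of the action of $B$) is exactly the standard argument from those references, so there is nothing to compare and no gap to report.
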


Denote by $j_r:\mathcal{P}_{0,c}\to \mathcal{M}(C^*_r(G))$ the composition of $j$ with the extension to the multipliers of the canonical morphism $\lambda:C^*(G)\to C^*_r(G)$. It is the unique extension to $\mathcal{P}_{0,c}$ of the inclusion $C_c^\infty(G)\subset C^*_r(G)$.

If $P\in \mathcal{P}_{0,c}$ then $\Tilde{\lambda}j(P)=j_r(P)$. The following diagram commutes:

\begin{center}
\begin{tikzcd}[row sep=huge, column sep=huge]
    \mathcal{P}_{0,c} \arrow[rd, "j_r"'] \arrow[r, "j"] & \mathcal{M}(C^*(G)) \arrow[d, "\Tilde \lambda"] \\
    & \mathcal{M}(C^*_r(G))
\end{tikzcd}
\end{center}

By definition, $\Psi^*(G)$ (\emph{resp.} $\Psi^*_r(G)$) is the closure of $j(\mathcal{P}_{0,c})$ in $\mathcal{M}(C^*(G))$ (\emph{resp.} $\mathcal{M}(C^*_r(G))$).  Hence, the above diagrams imply the existence of a morphism
\begin{equation*}
    \overline{\lambda}:\Psi^*(G)/C^*(G)\longrightarrow \Psi^*_r(G)/C^*_r(G),
 \end{equation*}
such that $\overline{\lambda}\circ q\circ j=q_r\circ j_r$.

\subsection{Approximation of the principal symbol}

The last step in the proof of our main result is provided by the following Proposition.

\begin{proposition}\label{02}
Let $G\rightrightarrows M$ be a Lie groupoid. Let $x \in M$ and $\xi \in T^*_{x} M \backslash \{0\}$. Denote by $\ell$ the half line $\R_+^*\xi$. There exists a sequence of unit vectors  $\phi_n\in L^2(G_{x})$ such that, for every $P\in \mathcal{P}_{0,c}$, we have $$\lim_{n\to \infty} \langle \phi_n|\lambda_x (P)\phi_n\rangle=\sigma(P)(x,\ell).$$
 \end{proposition}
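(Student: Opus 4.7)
The strategy is to take $\phi_n$ to be a high-frequency coherent state on $G_x$, localized in position at $x$ and in momentum on the ray $\ell$; then $\langle \phi_n, \lambda_x(P)\phi_n\rangle$ will be computed from the explicit formula of Definition \ref{340} by the same stationary-phase argument that extracts the principal symbol of a pseudodifferential operator from coherent states in the Euclidean setting.

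First I would set up local coordinates. Let $d = \dim N_x$. The restriction of $\theta$ to $N_x$ is a diffeomorphism from a neighborhood $V_x$ of $0 \in N_x$ onto a neighborhood of $x$ in $G_x$, under which the Haar measure pulls back to $J(X)\,dX$ for some smooth positive density with $J(0)=1$. Fix $\rho \in C_c^\infty(N_x)$ with support in a small ball inside $V_x$ and $\int_{N_x}|\rho|^2\,dX = 1$. With $\epsilon \in \{+1,-1\}$ chosen to match the sign convention of Definition \ref{340}, set
$$\phi_n\bigl(\theta(X)\bigr) = n^{d/4}\, J(X)^{-1/2}\,\rho\bigl(\sqrt{n}\,X\bigr)\, e^{\epsilon i n\langle X,\xi\rangle}$$
and $\phi_n \equiv 0$ outside $\theta(V_x)$. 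Then $\|\phi_n\|_{L^2(G_x)} = 1$, and $\phi_n$ is concentrated at $x$ with spatial spread $\sim 1/\sqrt{n}$ and frequency concentrated on the ray $\ell$ at scale $n$ with spread $\sqrt{n}$.

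Next I would decompose $P = P_0 + \kappa$ as in Definition \ref{340}. For the smooth compactly supported part $\kappa \in C_c^\infty(G)$, integration by parts against the rapid oscillation of $\phi_n$ gives $\|\lambda_x(\kappa)\phi_n\|_{L^2} = O(n^{-N})$ for every $N$, so the $\kappa$-contribution vanishes in the limit (equivalently, $\lambda_x(\kappa)$ is compact and $\phi_n \rightharpoonup 0$ weakly). For the principal part, Definition \ref{340} expresses $\langle \phi_n, \lambda_x(P_0)\phi_n\rangle$ in $\theta$-coordinates as
$$\frac{1}{(2\pi)^d}\int\!\!\int\!\!\int \overline{\phi_n(X)}\, e^{-i\langle \Phi(X,Y),\zeta\rangle}\, a(x,\zeta)\, \chi\bigl(\theta(X)\theta(Y)^{-1}\bigr)\, \phi_n(Y)\, J(X)\,J(Y)\,dX\,dY\,d\zeta,$$
where $\Phi(X,Y) = \theta^{-1}(\theta(X)\theta(Y)^{-1})$ satisfies $\Phi(X,X)=0$ and, by the defining properties of $\theta$, $\Phi(X,Y) = X - Y + O(|X|^2+|Y|^2)$ near the origin. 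For $n$ large, $\chi \equiv 1$ on the shrinking support. Rescaling $X = X'/\sqrt n$, $Y = Y'/\sqrt n$, $\zeta = n\xi + \sqrt n\, \zeta'$, the $\sqrt{n}$-order terms in the total phase cancel by design, and the amplitude converges pointwise to $a_0(x,\xi)$ by $0$-homogeneity of the principal symbol (the subprincipal terms $a_{-k}$ contribute $O(n^{-k})$ by their $-k$-homogeneity, and the tail of the asymptotic expansion contributes $O(n^{-\infty})$). Fourier inversion in $\zeta'$ then yields
$$\langle \phi_n, \lambda_x(P_0)\phi_n\rangle \longrightarrow a_0(x,\xi)\int_{N_x}|\rho(X')|^2\,dX' = a_0(x,\xi) = \sigma(P)(x,\ell).$$

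The main obstacle will be controlling the error terms in this passage to the limit: the corrections from $\Phi(X,Y) - (X-Y)$, from the Jacobian factors $J(X)J(Y)-1$, from the subprincipal symbols $a_{-k}$ ($k \ge 1$), and from the remainder of the classical asymptotic expansion, must all produce genuinely lower powers of $n$ and be dominated (uniformly in $n$) by an integrable function of $(X',Y',\zeta')$ so that dominated convergence applies. This is a routine, if technical, stationary-phase analysis on $\R^d$; the content specific to the groupoid setting is entirely packaged into the transport of the computation to $G_x$ via $\theta$ and the normalization of the Haar system against Lebesgue measure on $N_x$.
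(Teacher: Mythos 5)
Your overall strategy is the one the paper follows: parabolic coherent states transported to $G_x$ by $\theta$ (your $\phi_n$ is the paper's up to the reindexing $n\mapsto n^2$: spatial scale $h^{1/2}$, frequency $h^{-1}\xi$), the smoothing part of $P$ killed separately, and the principal part computed by a local oscillatory integral. One small correction before the main point: your parenthetical claim that $\lambda_x(\kappa)$ is compact for $\kappa\in C^\infty_c(G)$ is false in general --- for a Lie group ($M$ a point), $\lambda_x(\kappa)$ is convolution by $\kappa$ on $L^2(G)$, which is not compact. Your non-stationary-phase bound is a valid substitute, but the paper's Lemma \ref{220} does this more cheaply (supports of measure tending to $0$ plus Cauchy--Schwarz) and, importantly, for all of $C^*_r(G)$, which is needed later when $\beta$ is extended to $\Psi^*_r(G)/C^*_r(G)$.

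The genuine gap is in the final limit. After your rescaling $X=X'/\sqrt n$, $Y=Y'/\sqrt n$, $\zeta=n\xi+\sqrt n\,\zeta'$, the phase correction $\langle \Phi(X,Y)-(X-Y),\zeta\rangle$ is \emph{not} a lower power of $n$: since $\Phi(X,Y)-(X-Y)=O\bigl((|X|+|Y|)\,|X-Y|\bigr)$ with $|X|,|Y|\sim n^{-1/2}$ and $|\zeta|\sim n$, this term is $O(1)$ on the support of the amplitude. It disappears from the limit only because the $\zeta'$-integration localizes to $X'=Y'$, where it vanishes; making that precise is the Kuranishi trick (equivalently, the invariance of the principal symbol under change of phase function), not dominated convergence. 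Moreover the $\zeta$-integral of an order-$0$ amplitude is only an oscillatory integral, so a domination ``uniformly in $n$ by an integrable function of $(X',Y',\zeta')$'' cannot be arranged as stated. The paper sidesteps both difficulties: it first records that $Q$, viewed on $W\subset\R^k$, is an ordinary pseudodifferential operator with standard phase $\langle x-y,\xi\rangle$ and symbol $b$; it then writes $b(x,\xi)=b(0,\xi)+\sum_j x_j b_j(x,\xi)$ and discards the second part using $\|x_j\phi_n\|_2\to 0$ together with $L^2$-boundedness of order-$0$ operators; and it evaluates the frozen-coefficient operator exactly by Plancherel,
\begin{equation*}
\langle\phi_n,Q_0\phi_n\rangle=\frac{1}{(2\pi)^k}\int_{\R^k}|\hat\phi(\xi)|^2\,b(0,n\xi+n^2\xi_0)\,d\xi,
\end{equation*}
to which dominated convergence genuinely applies. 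To complete your version you must either carry out the Kuranishi reduction explicitly or, like the paper, invoke the coordinate- and phase-invariance of the calculus to pass to the standard quantization \emph{before} expanding, and then freeze the symbol in $x$ rather than attempt a three-variable limit.
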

 
 We will choose $\phi_n$ concentrated near $x$ and whose differential concentrates near $\ell$. 
 
 A first requirement is that $\lim_{n\to \infty} \langle \phi_n|\lambda_x (f)\phi_n\rangle=0$ if $f\in C_c^\infty G)$. This will come from the following Lemma which shows this remains true when $f\in C_r^*G)$.
 
 \begin{lemma}\label{220}
Let $(f_n)_{n\in \mathbb N}$ be a sequence of unit vectors in $L^2(G_x)$ with compact support $(C_n)_{n\in \mathbb N}$ whose measure tends to $0$. If $P\in C^*_r(G)$  then $\lim_{n\to \infty} \langle f_n|\lambda_x (P)f_n\rangle=0$, where $\lambda_x:C^*_r(G)\to B(L^2(G_x))$.
\end{lemma}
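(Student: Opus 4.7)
The plan is a two-step argument: first establish the limit directly for $P \in C_c(G)$ by a direct kernel estimate exploiting the shrinking measure of $C_n$, and then pass to arbitrary $P \in C^*_r(G)$ by a density argument using the bound $\|\lambda_x(Q)\| \leq \|Q\|_r$.

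First I would unwind what $\lambda_x(P)$ looks like for $P \in C_c(G)$. It is a convolution, hence an integral operator on $L^2(G_x)$ whose Schwartz kernel $k(\gamma_1,\gamma_2)$ is, up to the fixed half-density/Haar system conventions, essentially $P(\gamma_1\gamma_2^{-1})$. Two features are all that matter: $k$ is bounded by some constant $M(P) < \infty$ (because $P$ is continuous with compact support), and $k$ is supported in the tube $\{(\gamma_1,\gamma_2)\in G_x \times G_x : \gamma_1\gamma_2^{-1} \in \mathrm{supp}(P)\}$.

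Using only the boundedness of $k$ and the fact that $f_n$ is supported in $C_n$ with $\|f_n\|_{L^2}=1$, Cauchy--Schwarz on $L^2(C_n)$ gives
\begin{align*}
|\langle f_n, \lambda_x(P) f_n\rangle|
&\leq \iint_{C_n \times C_n} |k(\gamma_1,\gamma_2)|\,|f_n(\gamma_1)|\,|f_n(\gamma_2)|\, d\gamma_1\, d\gamma_2 \\
&\leq M(P)\left(\int_{C_n} |f_n|\, d\gamma\right)^2
\leq M(P)\,\mu(C_n)\,\|f_n\|_{L^2}^2 = M(P)\,\mu(C_n),
\end{align*}
which tends to $0$ by hypothesis on $\mu(C_n)$.

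For a general $P \in C^*_r(G)$, given $\varepsilon > 0$ choose $P' \in C_c(G)$ with $\|P-P'\|_r < \varepsilon$. Since $\lambda_x$ extends continuously to $C^*_r(G)$ with $\|\lambda_x(Q)\| \leq \|Q\|_r$, the remainder term $|\langle f_n, \lambda_x(P-P')f_n\rangle| \leq \|P-P'\|_r < \varepsilon$ is controlled uniformly in $n$, while the first step applied to $P'$ gives $\langle f_n, \lambda_x(P') f_n\rangle \to 0$. Hence $\limsup_n |\langle f_n, \lambda_x(P) f_n\rangle| \leq \varepsilon$, and letting $\varepsilon \to 0$ finishes the proof. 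The only point that requires care, rather than being genuinely difficult, is pinning down the uniform bound on the Schwartz kernel in the half-density formalism; once that is in place, the estimate is elementary.
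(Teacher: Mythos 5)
Your proposal is correct and follows essentially the same route as the paper: a direct kernel estimate for compactly supported elements (the paper bounds $\bigl|\langle f_n,\lambda_x(K)f_n\rangle\bigr|$ by the $L^2$-norm of the kernel restricted to $C_n\times C_n$, while you use the sup-norm of the kernel together with Cauchy--Schwarz to get the explicit bound $M(P)\,\mu(C_n)$), followed by the identical density argument via $\|\lambda_x(Q)\|\le\|Q\|_r$. Both estimates are valid and interchangeable here.
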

\begin{proof}
Let $K\in C^\infty_c(G)$. Suppose $(\chi_n)_{n\in \mathbb N}$ is a sequence of characteristic functions of $(C_n)_{n\in \mathbb N}$, then,
\begin{equation*}
    \langle f_n\,|\,\lambda_x(K)f_n\rangle = \int_{G_x\times G_x}\overline{f_n(\gamma_1)}K(\gamma_1 \gamma_2^{-1})f_n(\gamma_2)\chi_n(\gamma_1)\chi_n(\gamma_2)\mathrm{d}\gamma_1 \mathrm{d}\gamma_2 .
\end{equation*}
Put $K^\prime(\gamma_1,\gamma_2)=K(\gamma_1\gamma_2^{-1})\chi_0(\gamma_1)\chi_0(\gamma_2)$  we have
\begin{equation*}\label{K_n}
    |\langle f_n\,|\,\lambda_x(K)f_n\rangle| \leq \|f_n\otimes\overline{f_n}\|_2\cdot\|(\chi_n\otimes\chi_n)~ K^\prime\|_2 = \|(\chi_n\otimes\chi_n)~ K^\prime\|_2.
\end{equation*}
$(\chi_n)_{n\in \mathbb N}$ tends to $0$ \emph{a.e.} therefore the last norm tends to $0$. 

As $|\langle f_n\,|\,\lambda_x(K)f_n\rangle|\le \|K\|$ the set $\{K\in C_r^*(G);\ \langle f_n\,|\,\lambda_x(K)f_n\rangle\to 0\}$ is closed so that $\langle f_n\,|\,\lambda_x(K)f_n\rangle\to 0$ for every $K\in C_r^*(G)$.
\end{proof}

Thanks to Lemma \ref{220}, we will be able to assume that $P$ is given by a local formula as in definition \ref{340}. We are therefore given an open subset $U\subset G$ containing $M$, an open subset $U'\subset N$ containing $\{(y,0);\ y\in M\}$ and a diffeomophism $\theta : U' \to U$  such that
\begin{itemize}
    \item $\theta(y,0)=y$ for all $y\in M$ and $r(\theta(y,X))=y$ for $(y,X)\in U'$;
    \item $p_y(\mathrm d\theta_{(y,0)}(X))=X $ for all $y\in M$ and $X\in N_y$.
\end{itemize}
and the action of $P$ is given by an expression
\begin{equation*}
  \lambda_x(P)(\phi)(\gamma)=
\frac{1}{(2\pi)^n}\int_{N^*_{y}}\left(\int_{G_{x}}
    \mathrm e^{-\mathrm i\langle \theta^{-1}(\gamma \eta^{-1}),\xi\rangle}\,
    a(y,\xi )\,\chi(\gamma \eta^{-1})\,\phi(\eta)\,
    \mathrm d\eta\right)\,\mathrm d\xi,
    \end{equation*}
where  $\chi$ is a cut-off function, $y=r(\gamma)$, $n=\dim N_y$ and $a\sim \sum_{k=0}^{+\infty}a_{-k}$ is a \emph{classical symbol} of order $0$ whose first term is $a_0= \sigma(P).$

Let $W$ be a small enough neighborhood of $x$ in $G_x$ such that $W\subset U$, $\gamma_1\gamma^{-1}_2\in U$ for every $(\gamma_1,\gamma_2)\in W\times W$. Note that $\theta^{-1}(\gamma_1\gamma^{-1}_2)\in N_{r(\gamma)}\simeq T_\gamma G_x=T_\gamma W$. We further assume that the map  $(\gamma_1,\gamma_2)\mapsto \big(\gamma_1,\theta^{-1}(\gamma_1\gamma^{-1}_2)\big)$ is a diffeomorphism from $W\times W$ onto a neighborhood of $W$ in $TW$. It follows that for $\chi'\in C_c^\infty(W\times W)$ the operator $Q$ given by the formula
\begin{equation*}
Q(\phi)(\gamma)
  =\frac{1}{(2\pi)^n}\int_{N^*_{y}}\left(\int_{G_x}
    \mathrm e^{-\mathrm i\langle \theta^{-1}(\gamma \eta^{-1}),\xi\rangle}\,
    a(y,\xi)\,\chi(\gamma \eta^{-1})\chi'(\gamma,\eta)\,\phi(\eta)\,
    \mathrm d\eta\right)\,\mathrm d\xi,
\end{equation*}
($y=r(\gamma)$), is a pseudodifferential operator on $W$.  

Take then $\phi$ with compact support in $W$ and $\chi'=1$ on $\mathrm{supp}(\phi)\times \mathrm{supp}(\phi)$. We find $\langle \phi,\lambda_x(P)(\phi)\rangle =\langle \phi_n,Q(\phi_n)\rangle$. Note also that $\sigma(P)(x,\ell)=\sigma(Q)(x,\ell)$.

The map $\gamma \mapsto \theta^{-1}(\gamma^{-1})$ identifies $W$ with an open subset of the euclidean space $N_x$.

Therefore, the following Lemma ends the proof of Prop. \ref{02} 

\begin{lemma}
Let $B$ be an open ball in $\R^k$ and let $\phi$ be a smooth nonnegative function with compact support in $B$ with $L^2$-norm equal to $1$. Let $\xi_0\in \R^k\setminus \{0\}$ 
and put $\phi_n (y)= n^{\frac{k}{2}} \phi(ny) \mathrm e^{i n^2\langle y,\xi_0\rangle }.$\\
 For every pseudodifferential operator $Q$ with compact support in $B\times B$, we have $$\lim_{n\to \infty} \langle \phi_n,Q(\phi_n)\rangle =\sigma(Q)(0,\xi_0).$$
\end{lemma}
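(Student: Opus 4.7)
The plan is to exploit the microlocalization of $\phi_n$ at $(0,\xi_0)$: the function $\phi_n$ is concentrated at position $0$ on the spatial scale $1/n$ and, as the identity
\[
\widehat{\phi_n}(\xi)=n^{-k/2}\,\widehat{\phi}\bigl((\xi-n^{2}\xi_{0})/n\bigr)
\]
shows, at frequency $n^{2}\xi_{0}$ on scale $n$. Centering the frequency at $n^{2}\xi_{0}$ rather than at $n\xi_{0}$ is what ensures that the direction of the frequency locks onto $\xi_{0}/|\xi_{0}|$ in the limit (the width $n$ is negligible compared to the center $n^{2}|\xi_0|$). Homogeneity of degree $0$ of $\sigma(Q)=a_{0}$ will then produce the constant $\sigma(Q)(0,\xi_{0})$ inside the integrand, after which Fourier inversion together with $\|\phi\|_{2}=1$ finishes the job.

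Concretely, I would write $Q$ in Kohn--Nirenberg form
\[
(Q\psi)(y)=(2\pi)^{-k}\int e^{i\langle y,\xi\rangle}\,a(y,\xi)\,\widehat{\psi}(\xi)\,d\xi,
\]
with $a\sim a_{0}+a_{-1}+\cdots$ a classical symbol of order $0$, and substitute $y=u/n$, $\xi=n^{2}\xi_{0}+n\tau$ in $\langle\phi_n,Q\phi_n\rangle$. The Jacobians $n^{-k}\cdot n^{k}$, the normalisations $n^{k/2}\cdot n^{-k/2}$, and the high-frequency phases $e^{-in\langle u,\xi_{0}\rangle}$ (from $\overline{\phi_n(u/n)}$) and $e^{in\langle u,\xi_{0}\rangle}$ (from $e^{i\langle u/n,\,n^{2}\xi_{0}\rangle}$) cancel pairwise, leaving
\[
\langle\phi_n,Q\phi_n\rangle=(2\pi)^{-k}\iint\phi(u)\,e^{i\langle u,\tau\rangle}\,a\bigl(u/n,\ n^{2}\xi_{0}+n\tau\bigr)\,\widehat{\phi}(\tau)\,du\,d\tau.
\]

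To pass to the limit, for each fixed $(u,\tau)$ I have
\[
a_{0}\bigl(u/n,\,n^{2}\xi_{0}+n\tau\bigr)=a_{0}\bigl(u/n,\,\xi_{0}+\tau/n\bigr)\ \longrightarrow\ a_{0}(0,\xi_{0})=\sigma(Q)(0,\xi_{0})
\]
by homogeneity of degree $0$ and continuity at $(0,\xi_{0})$ (using $\xi_0\ne 0$), while the lower-order terms $a_{-j}$ contribute $O(n^{-2j})$. Since $|a|$ is bounded by a constant ($a$ is of order $0$), the integrand is dominated by $C\,|\phi(u)\widehat{\phi}(\tau)|$, which is integrable because $\phi$ has compact support and $\widehat{\phi}\in\mathcal{S}(\R^{k})$. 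Dominated convergence followed by Fourier inversion $(2\pi)^{-k}\int\widehat{\phi}(\tau)e^{i\langle u,\tau\rangle}\,d\tau=\phi(u)$ gives
\[
\lim_{n\to\infty}\langle\phi_n,Q\phi_n\rangle=\sigma(Q)(0,\xi_{0})\int\phi(u)^{2}\,du=\sigma(Q)(0,\xi_{0}).
\]
The main thing requiring care is the bookkeeping of scaling factors in the change of variables; beyond that there is no serious analytic obstacle, since the $L^{\infty}$ bound on order-$0$ symbols furnishes the dominating function for free.
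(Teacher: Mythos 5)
Your argument is correct and follows essentially the same route as the paper: the same identity $\widehat{\phi_n}(\xi)=n^{-k/2}\widehat{\phi}(\xi/n-n\xi_0)$, the same rescaling $\xi=n^2\xi_0+n\tau$, and the same combination of $0$-homogeneity, boundedness of order-$0$ symbols, dominated convergence, and $\|\phi\|_2=1$. The only difference is organizational: the paper first freezes the symbol at $x=0$ (using $\|x_j\phi_n\|_2\to 0$) and then applies Plancherel to the resulting Fourier multiplier, whereas you absorb the spatial dependence $a(u/n,\cdot)\to a(0,\cdot)$ into a single dominated-convergence step on the double integral, which is equally valid.
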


 \begin{proof}
 We may write 
 \begin{equation*}
Q(\phi)(x)
  =\frac{1}{(2\pi)^k}\int_{\R^k}\left(\int_{B}
    \mathrm e^{\mathrm i\langle x-y,\xi\rangle}\,
   b(x,\xi)\,\chi(x)\chi(y)\,\phi(y)\,
    \mathrm dy\right)\,\mathrm d\xi,
\end{equation*}
where $b$ is a classical symbol of order $0$ and $\chi $ is a smooth function with compact support in $B$ and equal to $1$  on the supports of all the $\phi_n$.

\medskip Write $b(x,\xi)=b(0,\xi)+\sum_{j=1}^kx_jb_j(x,\xi)$ and set \begin{equation*}
Q_0(\phi)(x)
  =\frac{1}{(2\pi)^k}\int_{\R^k}\left(\int_{B}
    \mathrm e^{\mathrm i\langle x-y,\xi\rangle}\,
   b(0,\xi)\,\chi(x)\chi(y)\,\phi(y)\,
    \mathrm dy\right)\,\mathrm d\xi.
\end{equation*}
Now $\|x_j\phi_n\|_2\to 0$, and therefore $\langle \phi_n,(Q-Q_0)(\phi_n)\rangle\to 0$.

Also, as $\chi \phi_n=\phi_n$, we find 
\begin{eqnarray*}\langle \phi_n,Q_0(\phi_n)\rangle&=&\frac{1}{(2\pi)^k}\int_{B}\overline{\phi_n(x)}\left(\int_{\R^k}\left(\int_{B}
    \mathrm e^{\mathrm i\langle x-y,\xi\rangle}\,
   b(0,\xi)\,\,\phi_n(y)\,
    \mathrm dy\right)\,\mathrm d\xi\right)\mathrm d x\\
    &=&\frac{1}{(2\pi)^k}\int_{B\times \R^k}\overline{\phi_n(x)}\mathrm e^{\mathrm i\langle x,\xi\rangle} b(0,\xi)\widehat{\phi_n}(\xi)\,dx\,d\xi\\
&=&\frac{1}{(2\pi)^k}\int_{\R^k}\overline{\widehat{\phi_n}(\xi)}b(0,\xi)\widehat{\phi_n}(\xi)\,d\xi\end{eqnarray*}
Using the change of variables $y\to ny$, we find \begin{eqnarray*}\widehat{\phi_n}(\xi)&=&n^{k/2}\int_{B}
    \mathrm e^{-\mathrm i\langle y,\xi\rangle}\,
   \,\phi (ny)e^{i n^2\langle y,\xi_0\rangle}\,dy\\
   &=&n^{-k/2}\hat\phi(\xi/n-n\xi_0),
\end{eqnarray*}
therefore \begin{eqnarray*}\langle \phi_n,Q_0(\phi_n)\rangle&=&\frac{1}{(2\pi)^k}\int_{\R^k}n^{-k}|\hat\phi(\xi/n-n\xi_0)|^2b(0,\xi)\,d\xi\\
&=& \frac{1}{(2\pi)^k}\int_{\R^k}|\hat\phi^2(\xi)|b(0,n\xi+n^2\xi_0)\,d\xi
\end{eqnarray*}
Now, $\frac{1}{(2\pi)^k}\int_{\R^k}|\hat\phi^2(\xi)|\,d\xi=1$ and $b$ is bounded. As $b(0,n\xi+n^2\xi_0)\to b_0(0,\xi_0)$, where $b_0$ is the principal part of the symbol, the Lemma follows from Lebesgue dominated convergence Theorem.
\end{proof}

\subsection{The exact sequences}
It follows from Prop. \ref{02} that, for $P\in \mathcal{P}_{0,c}$ and $(x,\xi)\in S^*N$, we have $|\sigma(x,\xi)|\le \|q_r(j_r(P))\|$, and therefore $\|\sigma(P)\|_\infty\le \|q_r(j_r(P))\|$.

So the map $\sigma $ extends by continuity and we obtain a morphism
\begin{equation*}
    \beta:\Psi^*_r(G)/C^*_r(G)\to C_0(S^*N)
\end{equation*}
such that $\sigma(P)=\beta\circ q_r\circ j_r(P)$.

Using Lemmas \ref{02} and \ref{03} we have
\begin{equation*}
    \|P\|_{\mathcal{M}(C^*(G))/C^*(G)}\leq  \|P\|_{\mathcal{M}(C_r^*(G))/C_r^*(G)}.
\end{equation*}
\begin{theorem}
The map $\phi: \Psi^*(G)/C^*(G)\to\Psi_r^*(G)/C_r^*(G) $ is a bijection, in addition, we have the following short exact sequences of $C^*$-algebras:
\begin{center}

\begin{tikzcd}
0 \arrow[r] & C^*(G) \arrow[r]\arrow[d,"\lambda"]   & \Psi^*(G) \arrow[r,"\sigma"]\arrow[d,"\overline \lambda"]    & C_0(S^*N) \arrow[r] \arrow[d, equal] & 0 \\
0 \arrow[r] & C^*_r(G) \arrow[r] & \Psi^*_r(G) \arrow[r,"\sigma"] & C_0(S^*N) \arrow[r]                                & 0
\end{tikzcd}
\end{center}
\end{theorem}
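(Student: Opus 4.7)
The plan is to assemble the three $C^*$-morphisms already constructed---$\alpha\colon C_0(S^*N)\to\Psi^*(G)/C^*(G)$, $\overline{\lambda}\colon\Psi^*(G)/C^*(G)\to\Psi^*_r(G)/C^*_r(G)$ (the map called $\phi$ in the statement), and $\beta\colon\Psi^*_r(G)/C^*_r(G)\to C_0(S^*N)$---and to show by a diagram chase on the dense subset $\mathcal{P}_{0,c}$ that they are mutually inverse $C^*$-isomorphisms. Once this is achieved, the two exact sequences and the commutativity of the diagram are essentially formal.

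First I would use the three defining identities $\alpha\circ\sigma=q\circ j$, $\beta\circ q_r\circ j_r=\sigma$, and $\overline{\lambda}\circ q\circ j=q_r\circ j_r$. For any $P\in\mathcal{P}_{0,c}$ these compose to
\begin{equation*}
\beta\circ\overline{\lambda}\circ\alpha\circ\sigma(P)=\sigma(P),\qquad \alpha\circ\beta\circ\overline{\lambda}\circ q\circ j(P)=q\circ j(P).
\end{equation*}
Since $\sigma(\mathcal{P}_{0,c})=C_c^\infty(S^*N)$ is dense in $C_0(S^*N)$ and $q\circ j(\mathcal{P}_{0,c})$ is dense in $\Psi^*(G)/C^*(G)$ by definition of $\Psi^*(G)$, continuity yields $\beta\circ\overline{\lambda}\circ\alpha=\mathrm{id}$ and $\alpha\circ\beta\circ\overline{\lambda}=\mathrm{id}$. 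These two identities equip $\alpha$ with both a left and a right inverse, so $\alpha$ is a $C^*$-isomorphism with $\alpha^{-1}=\beta\circ\overline{\lambda}$; in particular $\overline{\lambda}$ is injective. Its image is a sub-$C^*$-algebra (hence closed) of $\Psi^*_r(G)/C^*_r(G)$ containing the dense set $q_r\circ j_r(\mathcal{P}_{0,c})$, so $\overline{\lambda}$ is also surjective; this proves bijectivity of $\phi$, and then $\beta=\alpha^{-1}\circ\overline{\lambda}^{-1}$ is an isomorphism too.

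Next I would build the two exact sequences. Because $C_c^\infty(G)\subset j(\mathcal{P}_{0,c})$ is dense in $C^*(G)$, the inclusion $C^*(G)\hookrightarrow\Psi^*(G)$ is a genuine inclusion of $C^*$-algebras (and similarly in the reduced case). The maps $\sigma:=\alpha^{-1}\circ q\colon\Psi^*(G)\to C_0(S^*N)$ and $\sigma_r:=\beta\circ q_r\colon\Psi^*_r(G)\to C_0(S^*N)$ are each surjective with kernel exactly $C^*(G)$, respectively $C^*_r(G)$, since $q$ and $q_r$ are the quotient maps and $\alpha^{-1}$, $\beta$ are isomorphisms. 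For the commutativity of the diagram one observes that the extension $\widetilde\lambda\colon\mathcal{M}(C^*(G))\to\mathcal{M}(C^*_r(G))$ restricts to a $*$-morphism $\Psi^*(G)\to\Psi^*_r(G)$, because it sends the dense subalgebra $j(\mathcal{P}_{0,c})$ onto $j_r(\mathcal{P}_{0,c})$, extends $\lambda$ and induces $\overline{\lambda}$ on quotients; the left and middle squares commute by construction, and the right square follows from $\beta\circ q_r\circ\widetilde\lambda=\beta\circ\overline{\lambda}\circ q=\alpha^{-1}\circ q$, i.e.\ the two symbol maps coincide after transport by $\widetilde\lambda$.

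The hard part is not the present theorem but the earlier construction of $\beta$, which rested on the nontrivial approximation result Proposition~\ref{02} supplying the inequality $\|\sigma(P)\|_\infty\le\|q_r(j_r(P))\|$. Given that inequality and the morphisms $\alpha$, $\overline{\lambda}$, $\beta$, the theorem reduces, as above, to the density of $\mathcal{P}_{0,c}$ in $\Psi^*(G)$ and in $\Psi^*_r(G)$ combined with closedness of the image of a $*$-homomorphism of $C^*$-algebras---so no further genuine obstacle arises.
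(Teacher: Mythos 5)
Your proposal is correct and follows essentially the same route as the paper: compose the three morphisms $\alpha$, $\overline{\lambda}$, $\beta$ on the dense image of $\mathcal{P}_{0,c}$, extend by continuity, and conclude that the cyclic composites are identities, hence all three are isomorphisms. Your derivation of bijectivity of $\overline{\lambda}$ (injectivity from $\beta\circ\overline{\lambda}=\alpha^{-1}$, surjectivity from closed range plus density) is in fact a slightly more careful spelling-out of the step the paper passes over quickly.
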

\begin{proof}
  We have constructed morphisms:
\begin{align*}
    \alpha:C_0(S^*N)\longrightarrow& ~\Psi^*(G)/C^*(G),\\
    \overline{\lambda}: \Psi^*(G)/C^*(G)\longrightarrow& ~\Psi^*_r(G)/C^*_r(G),\\
    \beta: \Psi^*_r(G)/C^*_r(G)\longrightarrow& ~C_0(S^*N),
\end{align*}
such that the following diagram commutes: 
\begin{center}
    \begin{tikzcd}
                  &  & C_0(S^*N) \arrow[dd, "\alpha"]                          \\
 &  &      \\
{\mathcal P_{0,c}} \arrow[rr, "q\circ j"] \arrow[rruu, "\sigma"] \arrow[rrdd, "q_r\circ j_r"'] &  & \Psi^*(G)/C^*(G) \arrow[dd, "\overline\lambda"]                         \\
&  &           \\
  &  & \Psi^*_r(G)/C^*_r(G) \arrow[uuuu, "\beta"', bend right=60, shift right]
\end{tikzcd}
\end{center}
Since the maps $\sigma$, $q\circ j$, and $q_r\circ j_r$ have dense range it follows that
\begin{align*}
    \alpha\circ \beta\circ\overline{\lambda}&= id_{\Psi^*(G)/C^*(G)},\\
    \beta \circ \overline{\lambda}\circ\alpha &= id _{C_0(S^*N)},\\
    \overline{\lambda}\circ\alpha\circ\beta&=id_{\Psi^*_r(G)/C^*(G)}.
\end{align*}
In particular $\alpha$, $\beta$, and $\overline{\lambda}$ are isomorphisms.
\end{proof}


\begin{acknowledgements}
The author would like to sincerely thank her PhD advisors, Georges Skandalis and
Stéphane Vassout, for their support, guidance, and numerous valuable
insights throughout this work.
\end{acknowledgements}
\newpage



\end{document}